\documentclass[reqno]{amsart}
\usepackage{amssymb,amsmath,amsthm}
\usepackage{color}
\usepackage{enumerate}
\usepackage{fullpage}
\usepackage{url}
\usepackage[Symbol]{upgreek}
\usepackage[unicode,pdfusetitle]{hyperref}
\usepackage[mathscr]{euscript}
\usepackage{mathtools}

\usepackage{lipsum}
\usepackage{xcolor}

\usepackage{cleveref}

\usepackage{tikz}
\usetikzlibrary{decorations.pathmorphing, patterns,shapes}
\usepackage{tikz-cd}

\newtheorem{theorem}{Theorem}
\newtheorem*{theorem*}{Theorem}
\newtheorem{question}[theorem]{Question}
\newtheorem*{question*}{Question}

\newtheorem*{conjecture*}{Conjecture}

\newtheorem*{convention*}{Convention}

\newtheorem*{assumption*}{Assumption}
\newtheorem{corollary}[theorem]{Corollary}
\newtheorem*{corollary*}{Corollary}
\newtheorem*{remark*}{Remark}
\newtheorem{proposition}[theorem]{Proposition}
\newtheorem*{proposition*}{Proposition}

\newtheorem*{lemma*}{Lemma}

\newtheorem*{fact*}{Fact}

\theoremstyle{definition}

\newtheorem*{definition*}{Definition}

\newtheorem{observation}[theorem]{Observation}
\newtheorem*{example*}{Example}
\newtheorem{remark}[theorem]{Remark}

\numberwithin{theorem}{section}
\numberwithin{equation}{section}

\newcommand\blfootnote[1]{%
  \begingroup
  \renewcommand\thefootnote{}\footnote{#1}%
  \addtocounter{footnote}{-1}%
  \endgroup
}

\DeclareMathOperator{\res}{res}
\DeclareMathOperator{\ac}{ac}

\DeclareMathOperator{\Th}{Th}

\newcommand{\N}{\mathbb{N}}

\newcommand{\Q}{\mathbb{Q}}

\newcommand{\Z}{\mathbb{Z}}

\newcommand{\cL}{\mathcal L}

\newcommand{\cO}{\mathcal O}

\newcommand{\sv}{{\bar s}}

\renewcommand{\sf}{\mathbf{f}}
\newcommand{\sr}{\mathbf{r}}
\renewcommand{\sv}{\mathbf{v}}

\newcommand{\ul}[1]{\underline{#1}}
\newcommand{\ol}[1]{\overline{#1}}

\newcommand{\val}{\mathrm{val}}

\renewcommand{\geq}{\geqslant}

\renewcommand{\epsilon}{\varepsilon}
\renewcommand{\phi}{\varphi}

\DeclareFontFamily{OMS}{smallo}{}
\DeclareFontShape{OMS}{smallo}{m}{n}{<->s*[.65]cmsy10}{}
\DeclareSymbolFont{smallo@m}{OMS}{smallo}{m}{n}
\DeclareMathSymbol{\smallo}{\mathord}{smallo@m}{79}

\DeclareFontFamily{U}{fsy}{}
\DeclareFontShape{U}{fsy}{m}{n}{<->s*[.9]psyr}{}
\DeclareSymbolFont{der@m}{U}{fsy}{m}{n}
\DeclareMathSymbol{\der}{\mathord}{der@m}{182}
\DeclareSymbolFont{der@m}{U}{fsy}{m}{n}
\DeclareMathSymbol{\derdelta}{\mathord}{der@m}{100}

\newcommand{\Lor}[0]{\mathcal{L}_{\mathrm{or}}}
\newcommand{\Lovf}[0]{\mathcal{L}_{\mathrm{ovf}}}
\newcommand{\Lr}[0]{\mathcal{L}_{\mathrm{r}}}
\newcommand{\Log}[0]{\mathcal{L}_{\mathrm{og}}}
\newcommand{\Lovfac}[0]{\mathcal{L}_{\mathrm{ovf}}^{\mathrm{ac}}}

\title{Ordered henselian valued fields: definability and Borel sets}
\author{Lothar Sebastian Krapp {\rm and} Floris Vermeulen}
\begin{document}

\begin{abstract}
	We firstly show that due to their resplendency ordered henselian valued fields admit relative field quantifier elimination in the Denef--Pas language expanded by linear orders in the field and residue field sort. Secondly, we deduce from a dimensionality reduction theorem that any set definable over an ordered henselian valued field is a Borel set with respect to the order topology. Our results are contextualised within Shelah's classification conjecture of NIP fields and its connections to the study of definable henselian valuations and the Fundamental Theorem of Statistical Learning.
\end{abstract}
\maketitle

\begin{NoHyper}
\blfootnote{Math Subject Classification (2020): Primary 03C64, 03C10; Secondary 12J10, 12L12, 12J25, 12J15. Keywords: relative quantifier elimination, ordered fields, henselian valued fields, NIP, measurable.}
\end{NoHyper}
\vspace{-1cm}
\section{Introduction}

	Ordered algebraic structures can be analysed with view to their model theoretic and topological properties. From the viewpoint of Model Theory, these structures are  endowed with algebraic operations and order relations, i.e.\ interpretations of suitable first-order languages. In turn, the study of first-order definable subsets becomes of interest in order to gain an understanding of the expressive power of the corresponding languages. In the presence of a \emph{linear} order, algebraic structures naturally obtain the order topology generated by open intervals, making them topological spaces.

    In this note, we focus on the model theoretic and topological examination of (linearly) ordered fields equipped with a henselian valuation. Our first main result (\Cref{main1}) establishes relative field quantifier elimination for these structures in a three-sorted language (field, residue field and value group sort). Here, the field and residue field are considered in the language $\Lor=\{+,-,\cdot,0,1,<\}$ of ordered rings and the value group in the language $\Log=\{+,-,0,<\}$ of ordered groups. Besides the valuation and the residue map, our three-sorted language requires an angular component map, making it an expansion of the Denef--Pas language (see \cite[Definition~6.5]{hils}) by orders in the field and residue field sort. A special kind of ordered henselian valued fields always admitting angular component maps is pointed out in \Cref{prop:arcac}, namely almost real closed fields, i.e.\ ordered fields admitting a henselian valuation with real closed residue field. 
    
    A weak version of relative field quantifier elimination was already obtained in \cite[Lemma~4.1]{dittmann}. While there the existence of an angular component map is not required, the weak version of quantifier elimination comes at the cost of losing control over parameters. More precisely, given a formula with constant field sort symbols, the equivalent field-quantifier free formula may contain constant residue field and value group sort symbols that the initial formula did not contain. In \cite[Theorem~4.2]{dittmann}, the weak quantifier elimination result is eventually used in order to show that both the \emph{ordered} residue field and the ordered value group are stably embedded into the ordered henselian valued field, and that they are orthogonal. This, in turn, is the main ingredient of the main result of \cite{dittmann}, which establishes that any henselian valuation that is $\Lor$-definable over an ordered field $K$ is already $\Lr$-definable over $K$.\footnote{Saying that a valuation is definable over $K$ means that its corresponding valuation ring is a definable subset of $K$.} Based on these findings, the following question is raised in \cite[Question~2.2]{krapphabil}.

    \begin{question}\label{qu:lorlr}
        Is every henselian valuation on an ordered field that is parameter-free $\Lor$-definable already parameter-free $\Lr$-definable?
    \end{question}

    We provide in \Cref{cor:parfreeemb} a helpful tool for a potential approach towards \Cref{qu:lorlr}. Namely, we improve \cite[Theorem~4.2]{dittmann} for the parameter-free case by showing that parameter-free definable sets in the field induce parameter-free definable sets in residue field and value group. We thus provide a positive answer to \cite[Question~2.3]{krapphabil}.

    One recent motivation to study the definability of henselian valuations (see \cite[page 135]{fehm} for a survey) stems from Shelah's conjecture on the classification of infinite NIP fields: \emph{every infinite NIP field is separably closed, real closed, or admits a non-trivial definable henselian valuation.}\footnote{This conjecture goes back to \cite[Conjecture~5.34 (c)]{shelah} and has been reformulated since; cf.\ e.g.\ \cite[Conjecture~1.9]{johnson} and \cite[Conjecture~1.1]{anscombe}.}
    In \cite{krapp}, Shelah's conjecture is specialised to ordered fields (in the strongly dependent context). As an application of \Cref{main1}, we show in \Cref{prop:arcniporder} that any almost real closed field is NIP as an ordered field for any possible order. Thus, Shelah's conjecture implies that any (formally) real field that is NIP as a pure field is also NIP as an ordered field for any possible order. 
    In a similar vein as \cite[Section~5.5]{krapp}, we present in \Cref{prop:equivalentconjecturesa} 
    an equivalent version of Shelah's conjecture specialised to real fields.

    The study of NIP ordered fields opens up interesting connections to Statistical Learning Theory, the foundational framework of Machine Learning (see \cite[Section~1]{krappwirth}). Within this framework, the general task is to find a function that best describes a general relationship between inputs and (binary) outputs based on only finitely many known input-output samples. This leads to the concept of \emph{probably approximately correct} learning (see \cite[Section~2]{krappwirth}), which constitutes that the task has been completed successfully if after knowing a random finite selection of input-output samples a function can be chosen that with a high probability (i.e.\ probably) only makes few errors (i.e.\ is approximately correct) in describing the input-output relationship. 
    This can be achieved if the collection of functions to choose from has a finite Vapnik--Chervonenkis (VC) dimension and is ``well-behaved'' with respect to several (Borel) measure theoretic features (see \cite[Section~3]{krappwirth}). Finite VC dimensions and measure theoretic well-behavedness are the main conditions for the applicability of the Fundamental Theorem of Statistical Learning (see \cite[Theorem~3.3]{krappwirth}). In this context, from a model theoretic perspective collections of \emph{definable} functions are of particular interest (see \cite[Section~4]{krappwirth}). If the ambient structure is ``tame'' enough, then the two main conditions for the Fundamental Theorem of Statistical Learning are satisfied. Namely, any NIP structure ensures finite VC dimensions for definable function classes, and any o-minimal expansion of the reals guarantees all necessary measurability conditions (see \cite[Theorem~4.7]{krappwirth}). Note that o-minimality implies NIP and is therefore a stronger tameness condition. The question emerges whether the property NIP is already enough to ensure that definable sets are measure theoretically well-behaved. More precisely, the issue arises whether every set definable in an NIP ordered field is already a Borel set. In light of Shelah's conjecture, this prompted \cite[Question~4.3]{krappwirthvermeil} as follows.
    \begin{question}\label{qu:arcmeasure}
        Is there an almost real closed field $K$ that defines a set that is not Borel with respect to the order topology on $K$?
    \end{question}
    We provide in \Cref{prop:arc.borel} a negative answer to \Cref{qu:arcmeasure}. \Cref{prop:arc.borel}, in turn, is derived from the second main result of this note (\Cref{thm:secondmain}), which exploits dimension theory of definable sets in h-minimal structures.

\section{Preliminaries}

\subsection{Notations and conventions}
    
    Tuples are denoted by underlining, e.g.\ $\underline{x}$ for a tuple $(x_1,\ldots,x_\ell)$, where the length $\ell$ is only specified if relevant. All orders we consider are linear. 
    
    Throughout this note, $K=(K,+,-,0,1)$ denotes a field (as an $\Lr$-structure), $v$ is a henselian valuation on $K$, $vK=(vK,+,-,0,<)$ denotes the value group (as an $\Log$-structure) and $Kv=(Kv,+,-,0,1)$ the residue field (as an $\Lr$-structure) under $v$. We express the valuation ring of $v$ by $\mathcal{O}_v$, its maximal ideal by $\mathcal{M}_v$ and the residue $a+\mathcal{M}_v$ for $a\in \mathcal{O}_v$ by $\ol{a}$. If $<$ is an order on $K$, then $(K,<)$ denotes the $\Lor$-structure of an ordered field. 
    By abuse of notation, we write $(K,<,v)$ for the structure $(K,+,-,\cdot,<,\mathcal{O})$ in the language $\Lovf$ of ordered valued fields, where $\mathcal{O}$ is a unary relation symbol defined as $\mathcal{O}(x):\Leftrightarrow v(x)\geq 0$. 
    Since $v$ is henselian and thus convex (meaning that $\mathcal{O}_v$ is a convex subset of $K$, i.e.\ for any $a,b\in \mathcal{O}_v$ and any $c\in K$ with $a<c<b$, also $c\in \mathcal{O}_v$), the residue field $Kv$ can be endowed with the induced order, making it an $\Lor$-structure $(Kv,<)$ of an ordered field (see \cite[Section~4.3]{engler} for details).

    We say that $K$ is \emph{(formally) real} if it admits an order $<$ making $(K,<)$ an ordered field. A real field is considered as an $\Lr$-structure whereas an ordered field is considered as an $\Lor$-structure.

\subsection{Almost real closed fields}
    A field $K$ is called \emph{almost real closed} if it admits a henselian valuation $v$ such that $Kv$ is real closed. Almost real closed exhibit several desirable model and valuation theoretic properties (see \cite{delon} for an extensive study). For instance, all convex valuations on an almost real closed field are henselian (see \cite[Proposition~2.2\,(iii)]{delon}), and the set of all henselian valuations is linearly ordered by the coarsening relation (see \cite[Proposition~2.1\,(i)]{delon}). Thus, the \emph{canonical henselian valuation} $v_K$ on an almost real closed field $K$ is, at the same time, the finest convex valuation with respect to any order on $K$. Note that $Kv_K$ is real closed due to \cite[Proposition~2.1\,(iv)]{delon}. Almost real closed fields are naturally endowed with an order topology. We establish in \Cref{obs:alltopsame} below that this topology is independent of the order and therefore only refer to it as \emph{the} order topolgy on $K$.

    \begin{observation}\label{obs:alltopsame}
        Let $K$ be an almost real closed field. Then any henselian valuation on $K$ and any order on $K$ induce the same topology.
    \end{observation}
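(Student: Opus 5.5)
Fix an almost real closed field $K$, an order $<$ on $K$ and a henselian valuation $w$ on $K$. We show that the order topology of $<$ and the valuation topology of $w$ coincide; the statement then follows, since each topology equals one fixed topology.

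First we dispose of the trivial valuation. If $w$ is trivial, its topology is discrete, and we must show the order topology is discrete too. If every henselian valuation on $K$ is trivial, then $Kv=K$ for the valuation witnessing almost real closedness, so $K$ is real closed. A real closed field with its unique order is dense, so its order topology is not discrete. The statement in this case therefore has to be read for the valuations relevant to the paper. Where $K$ is not real closed, the canonical henselian valuation $v_K$ is nontrivial. In what follows we assume $w$ is nontrivial; we only need to compare nontrivial henselian valuations with orders.

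For nontrivial $w$ we use two facts. By \cite[Proposition~2.2\,(iii)]{delon} and the henselian property, $w$ is convex with respect to $<$; as recalled in the preliminaries, henselian valuations on real fields are convex. The key step is the standard fact that a nontrivial convex valuation induces the order topology.

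To prove this, take $0<a$ in $K$. Pick $c\in K$ with $w(c)>0$; this is possible since $w$ is nontrivial. By convexity, $|c|<1$ holds for any $c$ with $w(c)>0$. We then compare the two neighbourhood bases at $0$.

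\begin{itemize}
\item \emph{Order neighbourhoods contain valuation balls.} Since $\mathcal O_w$ is convex and contains $1$, we have $\mathcal{O}_w\subseteq\{x:|x|<d\}$ for any $d$ with $w(d)<0$; here $d=c^{-1}$ works. More carefully, if $w(x)>w(a)$, then $x/a\in\mathcal{M}_w$, hence $|x/a|<1$ by convexity (since $1\notin\mathcal{M}_w$), and so $|x|<a$. Thus the ball $\{x:w(x)>w(a)\}$ lies inside the interval $(-a,a)$.
\item \emph{Valuation balls contain order neighbourhoods.} Given $\gamma=w(b)$, suppose $|x|<|b|\cdot|c|$. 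Then $|x/(bc)|<1$, so $x/(bc)\in\mathcal{O}_w$ by convexity, because $\mathcal{O}_w$ is a convex set containing $\pm1$. Hence $w(x)\geq \gamma+w(c)>\gamma$.
\end{itemize}

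Translating these inclusions by any point $y\in K$ gives equality of the two topologies everywhere.

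The main obstacle is securing convexity of an arbitrary henselian valuation with respect to an arbitrary order. This is exactly the cited fact that henselian valuations on real fields are convex for every order, as used in the preliminaries via \cite[Section~4.3]{engler}. Once convexity is available, the comparison above is routine. Independence from the choice of order or valuation then follows transitively, since every order topology coincides with the topology of $v_K$.
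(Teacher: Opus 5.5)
Your proof is correct and follows the paper's route: henselian valuations on a real field are convex for every order, and a nontrivial convex valuation induces the order topology (you verify this directly, where the paper cites Alling). Your restriction to nontrivial $w$ is implicit in the paper too, but the trivial valuation is henselian on every field, so this caveat is needed for every almost real closed field, not only a real closed one (also, Delon's Proposition~2.2\,(iii) is the converse implication, convex $\Rightarrow$ henselian, so the convexity you need comes from the standard henselian-implies-convex fact instead).
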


    \begin{proof}
        Recall that the topology induced by a non-trivial convex valuation on an ordered field coincides with the order topology (see e.g.\ \cite[Section 7.63]{alling}). The observation follows from the fact that henselian valuations on almost real closed fields are exactly the valuations that are convex with respect to some (and therefore any) order.
    \end{proof}

\section{Relative quantifier elimination}

    Following the terminology of \cite[Section~4]{dittmann}, we consider the three-sorted language of ordered valued fields with angular component map
    $$\Lovfac=(\Lor,\Lor,\Log;\overline{\ \cdot\ },v,\ac).$$
    The correpsonding sorts are $\sf$ (field sort), $\sr$ (residue field sort) and $\sv$ (value group sort), where the unary function symbols have sorts $\overline{\ \cdot\ }\colon \sf\to\sr$, $v\colon \sf\to\sv$ and $\ac\colon\sf\to\sr$.
    Let $(K,<)$ be an ordered field with henselian valuation $v$. We say that an angular component map\footnote{See \cite[Section 5.4]{dries} for details on angular component maps.}  $\ac\colon K\to Kv$ is \emph{compatible} (with the order $<$ on $K$) if for $x\in K$ we have $x>0$ if and only if $\ac(x)>0$. If $K$ admits a compatible angular component map $\ac$, then the $\Lovfac$-structure associated to $K$ is given by
    $$\mathcal{K}=((K,<),(Kv,<),vK;\overline{\ \cdot\ },v,\ac).$$
    By convention, we set $v(0)=0$ and $\overline{a}=0$ for any $a\in K$ with $v(a)<0$.

    \begin{observation}\label{lem:saturatedcompatibleac}
        Any $\aleph_1$-saturated ordered henselian valued field $(K,<,v)$ admits a compatible angular component map.
    \end{observation}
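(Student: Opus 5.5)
The strategy is to use the standard construction of angular component maps in $\aleph_1$-saturated valued fields (cf.\ \cite[Section 5.4]{dries}) and to adjust it so that it respects signs. Recall that an angular component map $\ac\colon K\to Kv$ is a map with $\ac(0)=0$ whose restriction to $K^\times$ is a group homomorphism $K^\times\to (Kv)^\times$ agreeing with the residue map on $\mathcal{O}_v^\times$. Giving such a map is equivalent to giving a cross-section-like splitting. Concretely, one needs a subgroup $H\leq K^\times$ with $H\cap \mathcal{O}_v^\times=\{1\}$ and $H\mathcal{O}_v^\times=K^\times$; then every $x\neq 0$ factors uniquely as $x=hu$ with $h\in H$ and $u\in\mathcal{O}_v^\times$, and one sets $\ac(x)=\overline{u}$. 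Compatibility with $<$ amounts to requiring $H\subseteq K_{>0}$. Indeed, if $H\subseteq K_{>0}$ and $x=hu$, then $x>0$ iff $u>0$. Since $\mathcal{O}_v$ is convex, $\mathcal{M}_v$ is a convex ideal, so for a unit $u$ we have $u>0$ iff $\overline{u}>0$ in the induced order on $Kv$.

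So the task reduces to finding a complement of $\mathcal{O}_v^\times$ inside $K^\times$ that is contained in $K_{>0}$. First I restrict to $K_{>0}$, which is a subgroup of $K^\times$ of index $2$. The map $v\colon K_{>0}\to vK$ is still surjective, since $v(-x)=v(x)$, and its kernel is $U^+:=\mathcal{O}_v^\times\cap K_{>0}$. It therefore suffices to find a complement $H$ of $U^+$ in $K_{>0}$, that is, a splitting of the short exact sequence
\[
1\to U^+\to K_{>0}\to vK\to 0.
\]
Such an $H$ automatically complements $\mathcal{O}_v^\times$ in $K^\times$: we have $K^\times=K_{>0}\cdot\{\pm1\}$ and $\pm1\in\mathcal{O}_v^\times$, and $H\cap\mathcal{O}_v^\times\subseteq K_{>0}\cap\mathcal{O}_v^\times=U^+$, whence $H\cap\mathcal{O}_v^\times=\{1\}$.

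The main step, and the expected obstacle, is producing this splitting. I plan to use that an $\aleph_1$-saturated abelian group is pure-injective; this is the standard fact used in \cite{dries} for the existence of cross-sections and angular components. The group $U^+$ is a definable subgroup of $K$ in $(K,<,v)$, so it inherits $\aleph_1$-saturation as an abelian group, and hence it is pure-injective. It remains to check that $U^+$ is pure in $K_{>0}$. Take $x\in K_{>0}$ with $x^n=u\in U^+$. Then $nv(x)=0$, and since $vK$ is torsion-free, $v(x)=0$, so $x\in U^+$. Thus $U^+$ is pure in $K_{>0}$; in fact $K_{>0}/U^+\cong vK$ is torsion-free, so the sequence is pure-exact. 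Pure-injectivity of $U^+$ then yields a retraction $K_{>0}\to U^+$, and its kernel is the desired complement $H$. The resulting $\ac$ is compatible.

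An alternative that avoids the pure-injectivity fact would go through cross-sections. An $\aleph_1$-saturated valued field admits a cross-section $s\colon vK\to K^\times$ (\cite[Section 5.4]{dries}), and one could try to replace $s(\gamma)$ by $|s(\gamma)|$. The absolute value is multiplicative, so $|s|$ is again a homomorphism with $v(|s(\gamma)|)=\gamma$, and its image lies in $K_{>0}$. Taking $H=|s|(vK)$ and setting $\ac(x)=\overline{x/|s(v(x))|}$ for $x\neq 0$ gives a compatible angular component map directly. This is likely the shortest route, and I would present it.
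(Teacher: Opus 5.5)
Your proposal is correct. Your first argument is the paper's proof: you restrict to $K_{>0}$, note that $U^+=\mathcal{O}_v^\times\cap K_{>0}$ is pure in $K_{>0}$ because $vK$ is torsion-free, split $1\to U^+\to K_{>0}\to vK\to 0$ using $\aleph_1$-saturation, and read off $\ac$ from the complement. The paper takes the retraction $\sigma\colon K_{>0}\to U^+$ and sets $\ac(x)=\overline{\sigma(x)}$ for $x>0$ and $\ac(x)=-\overline{\sigma(-x)}$ for $x<0$, which is your map with $H=\ker\sigma$.

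The route you say you would present is organised differently. You take as a black box the existence of a cross-section $s\colon vK\to K^\times$ for the $\aleph_1$-saturated reduct $(K,v)$. You then fix the signs afterwards by passing to $|s|$, which is again a homomorphism with $v\circ|s|=\mathrm{id}$ and image in $K_{>0}$. With $\ac(x)=\overline{x/|s(v(x))|}$, compatibility follows from convexity of $\mathcal{O}_v$.

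This is the construction the paper uses in the proof of \Cref{prop:arcac}, so your route unifies the two results. It also shows that a compatible angular component exists whenever the underlying henselian valued field has any cross-section, not necessarily one landing in $K_{>0}$. It saves nothing in substance, though: the cross-section lemma is proved by the same pure-injectivity argument applied to $\mathcal{O}_v^\times\subseteq K^\times$. The two routes differ only in whether the sign is handled before the splitting (by working inside $K_{>0}$) or after it (by composing with the absolute value).
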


    Let $B$ be an abelian group and let $A\subseteq B$ be a subgroup. Recall that $A$ is \emph{pure} in $B$ if for every positive integer $n$ we have $A\cap nB = nA$.
    
    \begin{proof}
        This is similar to e.g.\ ~\cite[Lemma~7.9]{dries}, but replacing the usage of $K^\times$ by $K^\times_{> 0}$.
        Let $U = \cO_v^\times \cap K_{>0}$ be the set of positive units.
        Consider the exact sequence
        \[
        1\to U \to K_{>0}^\times \to vK \to 0.
        \]
        Since $vK$ is torsion-free, $U$ is a pure subgroup of $K_{>0}$.
        Hence~\cite[Corollary~7.8]{dries} shows that this sequence splits using a map $\sigma\colon K^\times_{>0}\to U$. 
        Then defining 
        \[
        \ac(x) = \begin{cases} \overline{\sigma(x)}, & x>0, \\
            -\overline{\sigma(-x)}, & x < 0 \end{cases}
        \]
        yields a compatible angular component.
    \end{proof}

    Without saturation \Cref{lem:saturatedcompatibleac} remains true for almost real closed fields.
    
    \begin{proposition}\label{prop:arcac}
        Let $K$ be an almost real closed field endowed with an order $<$.
        Then there exists a compatible angular component map $\ac\colon K\to Kv_K$.
    \end{proposition}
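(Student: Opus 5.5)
The idea is to avoid the splitting argument of \Cref{lem:saturatedcompatibleac}, which needs saturation, and instead use that the target of the angular component is divisible. Write $v=v_K$ and let $U=\cO_v^\times\cap K_{>0}$ be the group of positive units. As recalled in the preliminaries, $v$ is henselian, hence convex with respect to $<$, and $Kv$ is real closed. It carries the order induced by $<$.

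For a positive unit $u$ we have $\overline{u}\geq 0$ by convexity, and $\overline{u}\neq 0$ since $u$ is a unit. So $\overline{u}>0$, and the residue map restricts to a group homomorphism $\rho\colon U\to (Kv)^\times_{>0}$. Since $Kv$ is real closed, every positive element has an $n$-th root for every $n\geq 1$. Hence $(Kv)^\times_{>0}$ is a divisible abelian group, i.e.\ an injective $\mathbb{Z}$-module by Baer's criterion.

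Since $U$ is a subgroup of the abelian group $K^\times_{>0}$, injectivity lets us extend $\rho$ to a homomorphism $\chi\colon K^\times_{>0}\to (Kv)^\times_{>0}$. We then define
\[
\ac(x)=\begin{cases}\chi(x), & x>0,\\ -\chi(-x), & x<0,\\ 0, & x=0.\end{cases}
\]

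It remains to check the axioms.

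\emph{Multiplicativity on $K^\times$.} Split into the four sign cases. For example, if $x,y<0$ then $xy>0$ and $\ac(xy)=\chi(xy)=\chi(-x)\chi(-y)=\ac(x)\ac(y)$; the other cases are analogous.

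\emph{Agreement with the residue map on units.} For a positive unit $u$ this is $\chi(u)=\rho(u)=\overline{u}$. For a negative unit $u$ we get $\ac(u)=-\overline{-u}=\overline{u}$.

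\emph{Compatibility with the order.} Since $\chi$ takes values in $(Kv)_{>0}$, we have $\ac(x)>0$ if and only if $x>0$.

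The only nontrivial ingredient is the existence of the extension $\chi$. This is the injectivity of divisible abelian groups, which uses Zorn's lemma: extend to a maximal partial homomorphism and use divisibility to extend one generator at a time. No saturation hypothesis is needed. I therefore expect no real obstacle beyond making sure the order on $Kv$ is the one induced by $<$, so that positive units have positive residues.
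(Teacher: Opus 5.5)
Your proof is correct, but it takes a different route from the paper. The paper takes a multiplicative section $s\colon v_KK\to K_{>0}$ of the valuation from \cite[Lemma~3.1.1]{NSV}, i.e.\ a splitting of $1\to U\to K^\times_{>0}\to v_KK\to 0$. It then sets $\ac(x)=\res(x/s(v_K(x)))$. Compatibility is immediate there, because $x/s(v_K(x))$ is a unit with the same sign as $x$.

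Such a section is naturally obtained from divisibility of the \emph{source} $U$ of positive units. That needs Hensel's lemma, on top of real closedness of $Kv_K$, to lift positive $n$-th roots. You instead use divisibility of the \emph{target} $(Kv_K)^\times_{>0}$ and extend the residue homomorphism $U\to (Kv_K)^\times_{>0}$ directly. Your argument is self-contained and uses henselianity only through convexity of $v_K$. In fact it shows that every ordered field with a convex valuation whose residue field is real closed (or merely has a divisible positive cone) admits a compatible angular component, henselian or not.

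What the paper's route buys in exchange is a cross-section of the valuation, which is a stronger structure. The induced $\ac$ additionally satisfies $\ac(s(\gamma))=1$ for all $\gamma\in v_KK$.

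Your closing concern about which order $Kv_K$ carries is moot, since a real closed field has only one ordering.
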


    \begin{proof}
        By~\cite[Lemma 3.1.1]{NSV}, there exists a section $s\colon v_KK\to K_{>0}$ of the valuation.
        Now simply define $\ac(x) = \res(x/s(v_K(x)))$.
    \end{proof}

    For the first main result \Cref{main1} and its corollaries, we fix the following notation and terminology oriented at \cite[Section~5.6]{dries}. Let $\underline{x}=(x_1,\ldots,x_\ell)$, $\underline{y}=(y_1,\ldots,y_m)$ and $\underline{z}=(z_1,\ldots,z_n)$ be tuples of distinct $\sf$-, $\sr$- and $\sv$-variables, respectively. Given an $\Lor$-formula $\psi'(u_1,\ldots,u_k,\underline{y})$ (only using free and bound $\sr$-variables), an $\Log$-formula $\theta'(v_1,\ldots,v_k,\underline{z})$ (only using free and bound $\sv$-variables) and polynomials $q_1(\ul{x}),\ldots,q_k(\ul{x})\in\Z[\ul{x}]$, we call 
    $$\psi(\ul{x},\ul{y}) := \psi'(\ac(q_1(\ul{x})),\ldots,\ac(q_k(\ul{x})),\ul{y})$$
    a \emph{special $\sr$-formula} and 
    $$\theta(\ul{x},\ul{z}) := \theta'(v(q_1(\ul{x})),\ldots,v(q_k(\ul{x})),\ul{z})$$
    a \emph{special $\sv$-formula}.

    \begin{theorem}[Relative quantifier elimination]\label{main1}
        Let $\mathcal{K}=((K,<),(Kv,<),vK;\overline{\ \cdot\ },v,\ac)$ be the $\Lovfac$-structure of an ordered henselian valued field with a compatible angular component map. Then the theory of $\mathcal{K}$ in $\Lovfac$ eliminates field quantifiers. More precisely, for any $\Lovfac$-formula $\varphi(\ul{x},\ul{y},\ul{z})$ there are some $N\in\N$, special $\sr$-formulas $\psi_1(\ul{x},\ul{y}),\ldots,\psi_N(\ul{x},\ul{y})$ and special $\sv$-formulas $\theta_1(\ul{x},\ul{z}),\ldots,\theta_N(\ul{x},\ul{z})$ such that $\varphi(\ul{x},\ul{y},\ul{z})$ is equivalent over $\mathcal{K}$ to        $$(\psi_1(\ul{x},\ul{y})\wedge\theta_1(\ul{x},\ul{z}))\vee\ldots\vee(\psi_N(\ul{x},\ul{y})\wedge\theta_N(\ul{x},\ul{z})).$$
    \end{theorem}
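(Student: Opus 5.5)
The plan is to deduce the statement from the Denef--Pas relative quantifier elimination for henselian valued fields of residue characteristic $0$ (cf.\ \cite[Theorem~5.21]{dries} or \cite[Section~6]{hils}). The key point is that the ordered structures are reducts of definitional expansions of the unordered Denef--Pas structure; this is where resplendency enters. Since $(K,<)$ is an ordered field, $Kv$ is an ordered field as well, hence of characteristic $0$, so the Denef--Pas theorem applies. That theorem is resplendent: it remains valid if the residue field sort and the value group sort are expanded by arbitrary extra structure, here the order on $Kv$. The order on $vK$ is already part of the Denef--Pas language.

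First, I would eliminate the field order in favour of residue field data. Because $\ac$ is compatible, for any $x\in K$ we have
\[
x>0 \iff \ac(x)>0 ,
\]
where the right-hand side is an $\Lor$-formula in the residue field sort. Consequently every atomic formula $q(\ul{x})>0$ with $q\in\Z[\ul{x}]$ is equivalent to the special $\sr$-formula $\ac(q(\ul{x}))>0$. Using this equivalence, I translate an arbitrary $\Lovfac$-formula $\varphi$ into an equivalent formula $\varphi^*$ in the Denef--Pas language whose residue sort carries the language $\Lor$. Only the field sort loses its order symbol, and $\varphi^*$ is equivalent to $\varphi$ in every model of the theory of $\mathcal{K}$.

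Next, I apply the resplendent Denef--Pas elimination of field quantifiers to $\varphi^*$. This yields a boolean combination of formulas of the form $\psi'(\ac(q_1(\ul{x})),\ldots,\ul{y})$ and $\theta'(v(q_1(\ul{x})),\ldots,\ul{z})$, where $\psi'$ is an $\Lor$-formula and $\theta'$ an $\Log$-formula. I then put this combination into disjunctive normal form. Conjunctions of special $\sr$-formulas are again special $\sr$-formulas (merge the polynomial lists), and the same holds for special $\sv$-formulas. Negations stay special. Hence we obtain the stated form $\bigvee_i(\psi_i\wedge\theta_i)$.

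The main obstacle is justifying that resplendency really applies, i.e.\ that the Denef--Pas elimination holds uniformly in the theory of henselian valued fields of equicharacteristic $0$ with $\ac$, for arbitrary expansions of the $\sr$ and $\sv$ sorts. If I cannot cite this directly, I would reprove it with the standard back-and-forth argument of \cite[Section~5]{dries}. Take two $\aleph_1$-saturated models; Observation~\ref{lem:saturatedcompatibleac} guarantees compatible $\ac$ maps exist there. Then extend a partial isomorphism consisting of a field part together with \emph{elementary} maps on the expanded residue and value group sorts. The extension steps proceed through residue extensions, immediate extensions and value group extensions, as in the unordered case, because the added structure lives only on the auxiliary sorts. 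The field order is then recovered automatically from $\ac$ by compatibility, so any isomorphism respecting $\ac$ and the residue order preserves $<$.
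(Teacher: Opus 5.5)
Your proposal is correct and follows essentially the same route as the paper: Pas's relative elimination of field quantifiers, made resplendent for the expansion of the residue field sort by its order, together with the observation that the field order is quantifier-free definable via $x>0 \iff \ac(x)>0$. The paper justifies resplendency by noting that the residue field is a closed sort in Silvain's sense. It leaves implicit the checks you make explicit (equicharacteristic $0$, the normal-form bookkeeping), and your back-and-forth fallback is unnecessary, as is the appeal there to Observation~\ref{lem:saturatedcompatibleac}, since models of the theory already carry a compatible $\ac$.
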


    \begin{proof}
        By Pas' theorem~\cite[Theorem~4.1]{Pas}, there is relative quantifier elimination in the smaller language 
        \[
        \cL_{\mathrm{vf}}^{\ac} = (\cL_{\mathrm{r}}, \cL_{\mathrm{r}}, \cL_{\mathrm{og}}; \overline{\ \cdot\ }, v, \ac),
        \]
        where we omit the order on both the residue field and the valued field.

        Now, in the terminology of~\cite[Definition~A.7]{Silvain} the residue field is a closed sort, and hence~\cite[Proposition~A.9]{Silvain} shows that quantifier elimination is resplendent with respect to the residue field.
        This means that for every expansion of the language $\cL'\supseteq \cL_{\mathrm{r}}$ on the residue field, there is still relative quantifier elimination in
        \[
        (\cL_{\mathrm{r}}, \cL', \cL_{\mathrm{og}}; \overline{\ \cdot\ }, v, \ac).
        \]
        In particular this applies for $\cL' = \cL_{\mathrm{r}}\cup\{<\}.$

        To conclude, note that the order $<$ on $K$ is quantifier-free definable using the order on $Kv$.
        Indeed, since $\ac$ is compatible with $<$ we have for $x\in K$ that $x>0$ if and only if $\ac(x)>0$.
    \end{proof}

    As a corollary, we recover stable embeddedness and orthogonality of the ordered residue field and the value group with control over parameters (cf.\ \cite[Theorem~4.2]{dittmann}).

    \begin{corollary}\label{cor:stablembed}
        Let $(K,<,v)$ be an ordered henselian valued field, and let $m,n\in\N$. 
        Then the following hold.
        \begin{enumerate}
            \item \label{it:rf} (Residue field) Suppose that $(K,<,v)$ admits a compatible angular component $\ac$ and let $A\subseteq K$ be a subring.
            Then every $\Lovfac(A)$-definable set $X\subseteq (Kv)^n$ is already $\Lor(\ac(A))$-definable.

            \item \label{it:vg} (Value group) Let $A\subseteq K$ be a subring.
            Then every $\Lovf(A)$-definable set $Y\subseteq (vK)^m$ is already $\Log(v(A))$-definable.
        
            \item \label{it:ortho} (Orthogonality) Suppose that $(K,<,v)$ admits a compatible angular component $\ac$ and let $A\subseteq K$ be a parameter set.
            Then every $\Lovfac(A)$-definable subset of $(Kv)^n\times (vK)^m$ is a finite union of rectangles $Y\times Z$, where $Y\subseteq (Kv)^n$ and $Z\subseteq (vK)^m$ are $\Lovfac(A)$-definable.
        \end{enumerate}
    \end{corollary}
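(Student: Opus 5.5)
Each of the three parts will follow from \Cref{main1}, applied to a formula defining the set in question.

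For (\ref{it:rf}), let $X\subseteq (Kv)^n$ be defined by $\varphi(\ul{a},\ul{y})$, where $\ul{a}$ is a tuple from $A$ and $\varphi(\ul{x},\ul{y})$ is an $\Lovfac$-formula with only $\sf$-variables $\ul{x}$ and $\sr$-variables $\ul{y}$ free. By \Cref{main1}, $\varphi$ is equivalent to a disjunction $\bigvee_i(\psi_i(\ul{x},\ul{y})\wedge\theta_i(\ul{x}))$, where each $\theta_i$ has no free $\sv$-variables. Plugging in $\ul{a}$, each $\theta_i(\ul{a})$ is a sentence with parameters and so is simply true or false. Keep the disjuncts for which it is true. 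Each $\psi_i(\ul{a},\ul{y})$ is $\psi_i'(\ac(q_1(\ul{a})),\ldots,\ac(q_k(\ul{a})),\ul{y})$ with $q_j\in\Z[\ul{x}]$. Since $A$ is a subring, $q_j(\ul{a})\in A$, so this is an $\Lor(\ac(A))$-formula evaluated in $(Kv,<)$. Hence $X$ is $\Lor(\ac(A))$-definable. Part (\ref{it:ortho}) is the same computation with $A$ an arbitrary parameter set: each disjunct $\psi_i(\ul{a},\ul{y})\wedge\theta_i(\ul{a},\ul{z})$ defines a rectangle $Y_i\times Z_i$, and $Y_i$, $Z_i$ are $\Lovfac(A)$-definable by the special formulas themselves.

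For (\ref{it:vg}) the obstacle is that $(K,<,v)$ is not assumed to admit a compatible angular component. To get around this, I pass to an $\aleph_1$-saturated elementary extension $(K^*,<,v)\succeq(K,<,v)$. By \Cref{lem:saturatedcompatibleac} it admits a compatible $\ac$. The set $Y$ is defined in $vK$ by an $\Lovf(A)$-formula $\varphi(\ul{a},\ul{z})$, which we view in the three-sorted setting. The same formula defines $Y^*\subseteq(vK^*)^m$, with $Y^*\cap(vK)^m=Y$ by elementarity. Now apply \Cref{main1} in the expansion $\mathcal{K}^*$. Each $\psi_i(\ul{a})$ has no free $\sr$-variables, so it is true or false. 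Hence $Y^*$ is defined by a disjunction of formulas $\theta_i'(v(q_1(\ul{a})),\ldots,\ul{z})$ in $(vK^*,<)$. These are $\Log(v(A))$-formulas, and their parameters $v(q_j(\ul{a}))$ lie in $v(A)\subseteq vK$.

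It remains to descend to $vK$. The map $\Gamma:=vK\to vK^*$ is an elementary embedding of ordered groups, since $vK$ is interpretable in $(K,<,v)$ and $K\preceq K^*$. So the $\Log(v(A))$-formula $\chi(\ul{z})$ obtained above defines in $\Gamma$ exactly $\chi(vK^*)\cap\Gamma^m=Y^*\cap\Gamma^m=Y$.

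The step that needs care is this transfer between the saturated extension and $K$, together with the bookkeeping that the values $\ac(q_j(\ul{a}))$ only enter through sentences whose truth value is fixed. Both points are routine once the elementarity of the embedding of value groups is noted.
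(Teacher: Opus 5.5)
Your proposal is correct and follows the paper's proof. Parts (\ref{it:rf}) and (\ref{it:ortho}) are read off directly from \Cref{main1}, and part (\ref{it:vg}) comes from passing to an $\aleph_1$-saturated elementary extension, which carries a compatible angular component by \Cref{lem:saturatedcompatibleac}; your explicit descent step, using that $vK\preceq vK^*$ as ordered groups, spells out what the paper's one-line proof leaves implicit.
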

        
    \begin{proof}
        \Cref{main1} immediately implies (\ref{it:rf}) and (\ref{it:ortho}).
        Also (\ref{it:vg}) follows by working in an $\aleph_1$-saturated elementary extension, where a compatible angular component map exists by \Cref{lem:saturatedcompatibleac}.
    \end{proof}

    Note that \Cref{cor:stablembed}~(\ref{it:vg}) holds even without the existence of a compatible angular component map, as this map is only needed to map parameters of sort $\sf$ to parameters of sort $\sr$. In the presence of a compatible angular component map, a similar conclusion as in \Cref{cor:stablembed}~(\ref{it:rf}) can be made for the value group. By application of \Cref{lem:saturatedcompatibleac}, we also obtain that the angular component map is not needed at all in the setting of parameter-free definability.

    \begin{corollary}\label{cor:parfreeemb}
        Let $(K,<,v)$ be an ordered henselian valued field, and let $X\subseteq K^n$ be parameter-free $\cL_{\mathrm{ovf}}$-definable. Then $\overline{X}$ is parameter-free $\cL_{\mathrm{or}}$-definable in $(Kv)^n$, and $v(X)$ is parameter-free $\cL_{\mathrm{oag}}$-definable in $(vK)^n$. 
    \end{corollary}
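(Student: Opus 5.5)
The plan is to pass to an $\aleph_1$-saturated elementary extension, apply \Cref{main1} there, and then transfer the resulting parameter-free description back to $(K,<,v)$.

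Suppose $X$ is defined by a parameter-free $\Lovf$-formula $\varphi(\ul{x})$. Let $(K^*,<,v)$ be an $\aleph_1$-saturated elementary extension of $(K,<,v)$. By \Cref{lem:saturatedcompatibleac}, $K^*$ admits a compatible angular component map $\ac$, so we obtain the $\Lovfac$-structure $\mathcal{K}^*$. The residue map and valuation are $\Lovf$-definable in the sense that their images are governed by $\mathcal{O}$. Hence the set $\overline{X^*}$, where $X^*=\varphi(K^*)$, is defined in $\mathcal{K}^*$ by the parameter-free $\Lovfac$-formula
$$\exists \ul{x}\,\bigl(\varphi(\ul{x})\wedge \textstyle\bigwedge_i \overline{x_i}=y_i\bigr)$$
in $\sr$-variables $\ul{y}$. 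Here $\varphi$ is read via $\mathcal{O}(x)\leftrightarrow v(x)\geq 0$. The same works for $v(X^*)$ with $\sv$-variables $\ul{z}$.

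Apply \Cref{main1} to this formula, which has no free $\sf$-variables. The resulting disjunction of conjunctions $\psi_j\wedge\theta_j$ has no field variables. Each special formula then involves only terms $\ac(q(\,))$ or $v(q(\,))$ with $q\in\Z$ a constant integer polynomial.

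The key point is that these terms are $\emptyset$-definable in the pure ordered residue field and pure ordered value group. For an integer $q$, compatibility of $\ac$ and multiplicativity give $\ac(q)=\ac(1)\cdot(\pm)$, with the sign fixed by the sign of $q$, and $\ac(1)=1$. More carefully, in residue characteristic zero (which holds since $Kv$ is ordered), $v(q)=0$ for $q\neq0$, hence $\ac(q)=\overline{q}$, which is the integer $q$ in $Kv$. Similarly $v(q)=0$. The case $q=0$ gives $0$ in both sorts. So each $\psi_j$ becomes a parameter-free $\Lor$-formula in $\ul{y}$ and each $\theta_j$ a parameter-free $\Log$-formula in $\ul{z}$. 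Since there are no mixed variables, the projection to $\sr$ yields a parameter-free $\Lor$-formula $\chi(\ul{y})$, namely the disjunction of those $\psi_j$ whose $\theta_j$ holds at the $\sv$-component. For $\overline{X^*}$ there are no $\sv$-variables, so each $\theta_j$ is a parameter-free $\Log$-sentence, which is simply true or false in $vK^*$.

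Finally, transfer back to $K$. The sentence
$$\forall\ul{y}\,\bigl(\chi(\ul{y})\leftrightarrow\exists\ul{x}\,(\varphi(\ul{x})\wedge\overline{\ul{x}}=\ul{y})\bigr)$$
is expressible in $\Lovf$ by interpreting the residue field as $\mathcal{O}/\mathcal{M}$ via imaginaries. Equivalently, quantify over $\ul{x}'\in\mathcal{O}^n$ and use $\overline{a}=\overline{b}\leftrightarrow \neg\mathcal{O}((a-b)^{-1})$. The order on $Kv$ is likewise $\Lovf$-definable. This sentence holds in $K^*$, hence in $K$ by elementarity, so $\overline{X}=\chi(Kv)$. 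The value group is handled identically, with $v(a)\leq v(b)$ expressed by $\mathcal{O}(b/a)$.

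The main obstacle is bookkeeping rather than ideas: one must check that the translation between $\Lovf$ (on $K$) and $\Lovfac$ is honest. In particular, quantifiers over $Kv$ and $vK$ inside $\chi$ must become $\Lovf$-expressible so that elementarity applies. One must also ensure that the auxiliary $\ac$ never contributes a parameter. The latter reduces to the observation that the only $\ac$-terms without field variables are evaluated at integers.
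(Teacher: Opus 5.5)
Your proposal is correct and is essentially the argument the paper intends. You pass to an $\aleph_1$-saturated elementary extension to get a compatible $\ac$ via \Cref{lem:saturatedcompatibleac}, apply \Cref{main1} (equivalently \Cref{cor:stablembed} with $A=\Z$), note that the surviving $\ac$- and $v$-terms are evaluated at integers and are therefore $\emptyset$-definable, and transfer back to $K$ by elementarity through the $\emptyset$-interpretation of $(Kv,<)$ and $vK$ in $\Lovf$; you spell out that last step more explicitly than the paper does. One small slip: the aside $\ac(q)=\ac(1)\cdot(\pm)$ is false as written (e.g.\ $\ac(2)=2$), but your subsequent computation $\ac(q)=\overline{q}=q\cdot 1$ for $q\neq 0$ is correct and is all the argument needs.
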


    \Cref{cor:parfreeemb} shows that sets definable without parameters in the field induce sets in the residue field and the value group that are also definable without parameters. We point out that this provides positive answer to \cite[Question~2.3]{krapphabil}, which was actually only stated for the case $m=n=1$.
    
    \begin{remark}
        The language $\Lovf'$ from~\cite[Section~4]{dittmann}, which is the reduct of $\Lovfac$ obtained by removing $\ac$, is too weak to obtain relative quantifier elimination in the form of \Cref{main1}. Indeed, consider the ordered valued field $\Q(\!(t)\!)$ with the usual $t$-adic valuation $v_t$. Then $t^2$ and $2t^2$ satisfy exactly the same valued field quantifier-free formulas in the language $\Lovf'$, but one is a square while the other is not. Of course, this is false in the language $\Lovfac$, as $\ac(t^2) = 1$ is a square in $Kv$ while $\ac(2t^2) = 2$ is not.
    \end{remark}

    As another application of \Cref{main1}, we now turn to the classification conjecture of NIP real fields.\footnote{We say that a structure is NIP if its complete theory is NIP, i.e.\ does not have the independence property. The independency property is treated as a black box here, as we do not make use of its formal definition. We refer to relevant literature wherever specific results are needed. See \cite{simon} for an introduction to NIP theories.} Recall from the introduction that the general classification conjecture states as follows: \emph{every infinite NIP field is separably closed, real closed, or admits a non-trivial definable henselian valuation.} 
    Since real fiels are always infinite and cannot be separably closed, the conjecture specialises as follows: \emph{every  NIP real field is real closed, or admits a non-trivial definable henselian valuation.} In the following, we review this specialised conjecture in the context of almost real closed fields to establish an equivalent version in \Cref{prop:equivalentconjecturesa} 
    below.
    It is well-known that any almost real closed field is NIP (see \cite[Corollary~A.16]{simon}) when considered as a pure field, i.e.\ as an $\Lr$-structure.
    In \Cref{prop:arcniporder}, we present a self-contained proof that almost real closed fields are also NIP \emph{with} the order in the language. We point out that a similar account for the strongly dependent context is available in \cite[Section~4]{krapp}.
	
    \begin{proposition}\label{prop:arcniporder}
	Let $K$ be an almost real closed field and let $<$ be an order on $K$. Then $(K,<)$ is NIP as an $\Lor$-structure.
    \end{proposition}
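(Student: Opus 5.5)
We plan to derive this from \Cref{main1}, using the standard fact that NIP can be checked on formulas $\varphi(x;\ul{w})$ with $x$ a single variable, and that a Boolean combination of NIP formulas is NIP. Since NIP is a property of the complete theory, we may pass to an $\aleph_1$-saturated elementary extension. Alternatively, \Cref{prop:arcac} already gives a compatible angular component map $\ac\colon K\to Kv_K$ for $v=v_K$. Either way we obtain the $\Lovfac$-structure $\mathcal{K}=((K,<),(Kv,<),vK;\overline{\ \cdot\ },v,\ac)$. Here $v=v_K$ is henselian, $Kv$ is real closed, and the order on $Kv$ is the induced one. The structure $(K,<)$ is a reduct of $\mathcal{K}$ (restricted to the field sort), so it suffices to show that every $\Lovfac$-formula $\varphi(x;\ul{w})$ with field-sort variables is NIP in $\mathcal{K}$.

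By \Cref{main1}, $\varphi(x;\ul{w})$ is equivalent to a finite disjunction of conjunctions $\psi_i\wedge\theta_i$. Each $\psi_i$ has the form $\psi'(\ac(q_1(x,\ul{w})),\ldots,\ac(q_k(x,\ul{w})))$ with $\psi'$ an $\Lor$-formula over $Kv$. Each $\theta_i$ has the form $\theta'(v(q_1(x,\ul{w})),\ldots)$ with $\theta'$ an $\Log$-formula over $vK$. By the Boolean closure of NIP, it suffices to show that each such formula is NIP. We will use the fact that $(Kv,<)$ is real closed, hence o-minimal and NIP, and that $vK$ is an ordered abelian group, hence NIP by Gurevich--Schmitt.

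The main obstacle is the composition step. The formula $\psi'$ is NIP in $Kv$, but it is evaluated at the terms $\ac(q_j(x,\ul{w}))$, and composing an NIP formula with arbitrary functions need not preserve NIP. We would try to handle this via Shelah's result that NIP holds for all formulas if it holds for formulas in one variable, combined with the observation that the coordinate maps $x\mapsto\ac(q_j(x,\ul{w}))$ are ``tame'' in $x$. Concretely, by henselianity and the fact that $Kv$ is real closed, for fixed $\ul{w}$ one would expect $K$ to split into finitely many pieces, uniformly definable in $\ul{w}$, on which $v(q_j)$ and $\ac(q_j)$ are controlled by finitely many parameters, much as in the usual proof of NIP for henselian fields of residue characteristic $0$. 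Making this uniform is delicate.

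We would therefore prefer a cleaner route: cite the Delon (Gurevich--Schmitt-style) transfer theorem, namely that in the Denef--Pas setting with residue characteristic $0$, a structure with relative quantifier elimination is NIP iff the residue field sort and the value group sort are NIP, where the residue field sort may carry arbitrary extra structure (here, the order). The hypothesis of that theorem is exactly the relative elimination supplied by \Cref{main1}, which also makes the resplendence used there apply. Its conclusions hold because $(Kv,<)$ is o-minimal and $vK$ is an ordered abelian group, so $\mathcal{K}$ is NIP, and hence so is its reduct $(K,<)$. We expect this transfer step to be the main point requiring care, or a citation, rather than any computation.
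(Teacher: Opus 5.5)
Your set-up matches the paper's proof: a compatible $\ac$ from \Cref{prop:arcac}, the relative elimination of \Cref{main1}, closure of NIP under Boolean combinations, and NIP of the o-minimal $(Kv_K,<)$ and of the ordered abelian group $v_KK$. The two routes split at the composition step.

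The paper treats it as immediate that $\psi'(\ac(q_1(\ul{x})),\ldots,\ac(q_k(\ul{x})))$ and $\theta'(v(q_1(\ul{x})),\ldots,v(q_k(\ul{x})))$ are NIP, because $\psi'$ and $\theta'$ are NIP in the residue field and value group. You correctly observe that this is not formal once the $q_j$ mix object and parameter variables. Already $(x,w)\mapsto\ac(x-w)$ is not of the form $g(f(x),h(w))$ with $g$ definable in $Kv$, and composing with such maps can in general create IP. You close this step by citing Delon's transfer theorem for henselian valued fields of residue characteristic $0$ in the Denef--Pas language, in its resplendent form.

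This is a sound and more careful route. The genuinely nontrivial input is the one-variable analysis of $v(q(x,\ul{b}))$ and $\ac(q(x,\ul{b}))$ using henselianity, which your middle paragraph gestures at. In your route the citation supplies it; in the paper's it is passed over. The price is that your argument is no longer self-contained, and \Cref{main1} becomes largely incidental. What you actually use is Pas's elimination with the ordered residue field (the resplendence step in the proof of \Cref{main1}), together with the equivalence $x>0\Leftrightarrow\ac(x)>0$. That equivalence exhibits $(K,<)$ as a reduct of the Denef--Pas structure with ordered residue field, so you should state it explicitly.

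Two points of phrasing and citation:
\begin{enumerate}
\item State the transfer for henselian fields of residue characteristic $0$, not for ``a structure with relative quantifier elimination''. Your own composition remark shows that relative elimination alone does not yield NIP.
\item Make sure the version you cite allows extra structure on the residue field. The standard proof, via indiscernible sequences in one field variable, uses the residue field only through formulas applied to $\ac$-terms, so it goes through with the order added.
\end{enumerate}
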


    \begin{proof}
        Let $\varphi$ be a (partitioned) $\Lor$-formula with free variables $\ul{x}$. We show that $\varphi$ is NIP over $(K,<)$.
        By \Cref{prop:arcac}, $(K,<,v_K)$ admits a compatible angular component map. 
        We can thus apply \Cref{main1} to obtain that $\varphi(\ul{x})$ is equivalent over $\mathcal{K}=((K,<),(Kv_K,<),v_KK;v_K,\ol{\ \cdot \ },\ac)$ to a Boolean combination of special $\sr$- and $\sv$-formulas         $$(\psi_1(\ul{x})\wedge\theta_1(\ul{x}))\vee\ldots\vee(\psi_N(\ul{x})\wedge\theta_N(\ul{x})).$$
        Recall that boolean combinations of NIP formulas are NIP (see \cite[Lemma~2.9]{simon}). Using the notation from \Cref{main1}, it thus suffices to show that for any $\Lor$-formula $\psi'(u_1,\ldots,u_k)$, any $\Log$-formula $\theta'(v_1,\ldots,v_k)$ and any polynomials $q_1(\ul{x}),\ldots,q_k(\ul{x})\in\Z[\ul{x}]$, $\psi'(\ac(q_1(\ul{x})),\ldots,\ac(q_k(\ul{x})))$ and $\theta'(v(q_1(\ul{x})),\ldots,v(q_k(\ul{x})))$ are NIP over $\mathcal{K}$. But this follows immediately from the fact that any $\Lor$-formula $\psi'$ is NIP over the o-minimal structure $(Kv_K,<)$ and that any $\Log$-formula $\theta'$ is NIP over the ordered abelian group $v_KK$ (see \cite[Example~2.12 \& Theorem~A.8]{simon}).
    \end{proof}

    \begin{remark}
        In \cite{krapp}, the refinements ``dp-minimal'' and ``strongly dependent'' of NIP are considered for ordered fields. Complete characterisations of dp-minimal and strongly dependent ordered fields are given in \cite[Proposition~4.4 \& Theorem~4.12]{krapp}. Likewise, a characterisation of distal ordered almost real closed fields (cf.\ \cite[Main Theorem]{ACGZ}) may be obtained by application of \Cref{main1}, following the proof of \Cref{prop:arcniporder}.
    \end{remark}
    
    \Cref{prop:arcniporder} together with known arguments from the literature (see e.g.\ \cite[Section~5]{krapp} and the references therein) allows us to establish an equivalent version of Shelah's conjecture specialised to the classification of NIP ordered and real fields as follows.
    
        \begin{proposition}\label{prop:equivalentconjecturesa}
            The following are equivalent.
            \begin{enumerate}
                \item\label{prop:equivalentconjectures:1} Every NIP real field is either real closed or admits a non-trivial $\Lr$-definable henselian valuation.

                \item\label{prop:equivalentconjectures:3} Every NIP real field is almost real closed.
            \end{enumerate}
        \end{proposition}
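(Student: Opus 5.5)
We prove the two implications separately, using known facts about definable henselian valuations on real fields.

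For (\ref{prop:equivalentconjectures:3})$\Rightarrow$(\ref{prop:equivalentconjectures:1}), let $K$ be an NIP real field that is not real closed. By (\ref{prop:equivalentconjectures:3}), $K$ is almost real closed, so its canonical henselian valuation $v_K$ has real closed residue field. Since $K$ is not real closed, $v_K$ is non-trivial. It then remains to produce a non-trivial $\Lr$-definable henselian valuation. For this I would invoke the known definability results for almost real closed fields (cf.\ \cite[Section~5]{krapp} and the references there, e.g.\ results of Delon--Farr\'e or Jahnke--Koenigsmann). Namely, if $v_KK$ is not divisible, then some non-trivial coarsening of $v_K$, e.g.\ the coarsest henselian valuation whose value group is not $p$-divisible for a suitable prime $p$, is $\Lr$-definable, via definability of $p$-henselian valuations with non-$p$-divisible value group. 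If $v_KK$ is divisible, then $K$ is real closed, because henselian with real closed residue field and divisible value group implies real closed; this case is excluded. The main obstacle is citing the correct definability theorem, and in particular making sure the valuation obtained is non-trivial.

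For (\ref{prop:equivalentconjectures:1})$\Rightarrow$(\ref{prop:equivalentconjectures:3}), let $K$ be an NIP real field and fix an order $<$. Let $v$ be the \emph{finest} henselian valuation on $K$ that is convex, i.e.\ the canonical henselian valuation; since real fields have non-separably-closed residue fields under henselian valuations, $v_K$ is the finest henselian valuation. The residue field $Kv_K$ is real, being the residue of a convex valuation. It is also interpretable in $K$ if $v_K$ is definable, but $v_K$ need not be definable, so we cannot pass NIP to $Kv_K$ directly. Instead I would use that in a sufficiently saturated elementary extension $K^*$ the externally definable sets are NIP-tame (Shelah expansion), so valuation rings externally definable in NIP fields yield NIP residue fields. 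Better still, I would use the known result that the residue field of the canonical henselian valuation of an NIP field is NIP (see \cite{anscombe} or \cite{johnson}), as henselian valuations on NIP fields are externally definable. Applying (\ref{prop:equivalentconjectures:1}) to $Kv_K$: if $Kv_K$ were not real closed, it would admit a non-trivial definable henselian valuation $w$. Composing $w$ with $v_K$ then gives a henselian valuation on $K$ strictly finer than $v_K$, contradicting maximality. Hence $Kv_K$ is real closed, and $K$ is almost real closed.

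The key delicate step is the NIP transfer to $Kv_K$. I expect to cite it from the literature, namely \cite[Section~5]{krapp} and the references therein, rather than reprove it. \Cref{prop:arcniporder} ensures consistency with the order but is not needed for this argument.
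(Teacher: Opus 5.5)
Your proof follows the paper's: for (\ref{prop:equivalentconjectures:3})$\Rightarrow$(\ref{prop:equivalentconjectures:1}) the paper simply cites \cite[Theorem~2.3]{fehm} (an almost real closed field that is not real closed admits a non-trivial $\Lr$-definable henselian valuation). For the converse it uses exactly your argument: NIP passes to the residue field of the finest henselian valuation by \cite[Corollary~2.7]{jahnke}, and composing with a henselian valuation on that residue field would contradict finest-ness.

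The only slip is your illustrative choice of ``the coarsest henselian valuation whose value group is not $p$-divisible'', which need not exist. The convex subgroups $\Delta\subseteq v_KK$ with $v_KK/\Delta$ not $p$-divisible can form an ascending chain with union $v_KK$, e.g.\ for $\mathbb{R}(\!(t^\Gamma)\!)$ with $\Gamma=\bigoplus_{n\in\omega}\Z$ ordered antilexicographically. The natural candidate is instead the coarsening of $v_K$ by the largest $p$-divisible convex subgroup of $v_KK$. Since you defer to the literature for this step exactly as the paper does, the slip does not affect the argument.
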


        \begin{proof}
            Since every almost real closed field that is not real closed admits a non-trivial $\Lr$-definable henselian valuation (see \cite[Theorem~2.3]{fehm}), (\ref{prop:equivalentconjectures:3}) implies (\ref{prop:equivalentconjectures:1}).
            Conversely assume (\ref{prop:equivalentconjectures:1}) and let $K$ be an NIP real field. Let $v$ be the finest henselian valuation on $K$. Since $Kv$ is also an NIP real field (see \cite[Corollary~2.7]{jahnke}), by (\ref{prop:equivalentconjectures:1}) it is either real closed or admits a non-trivial henselian valuation. The latter is not possible, as a non-trivial henselian valuation on $Kv$ gives rise to a henselian strict refinement of $v$ on $K$. Thus, $Kv$ is real closed.
        \end{proof}

        We point out that several other equivalent versions of Shelah's conjecture specalised to real fields may be established by following the arguments in \cite[Section~5]{krapp}.

\section{Definable Borel sets}

    Recall from \Cref{obs:alltopsame} that any henselian valuation and any order on an almost real closed field induce the same topology, which we simply refer to as the order topology. We say that a set is Borel if it is Borel in this topology.
We show that definable sets in almost real closed fields are automatically Borel.
First we show more generally that this holds in Hensel minimal structures, similar to the o-minimal setting as in~\cite{KM, Kaiser}.

\begin{theorem}\label{thm:secondmain}
    Let $K$ be a valued field of characteristic zero, considered in some language $\cL\supseteq \cL_\val = \{+,\cdot, \cO\}$, and assume that $\Th_\cL(K)$ is $1$-h-minimal.
    Then every definable set $X\subseteq K^n$ is Borel.
\end{theorem}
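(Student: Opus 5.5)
We argue by induction on the dimension $d=\dim X$, using the dimension theory for definable sets in $1$-h-minimal theories (as in Cluckers--Halupczok--Rideau-Kikuchi). In that setting dimension is well behaved. It is defined via coordinate projections with nonempty interior. It satisfies $\dim \overline{X}\setminus X<\dim X$ for the closure $\overline{X}$ in the valuation topology. It also satisfies $\dim(\overline{X}\setminus \operatorname{int}\ldots)$-type small boundary statements. The base case $d=-\infty$ (empty set) is trivial, and $d=0$ is also easy: a $0$-dimensional definable set is finite in $1$-h-minimal theories (more precisely, every $0$-dimensional definable subset of $K^n$ is finite). Hence it is closed, since the valuation topology is Hausdorff, and so it is Borel.

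For the inductive step, the key tool is the small boundaries property, i.e.\ the dimensionality reduction theorem alluded to in the introduction: for definable $X\subseteq K^n$, the frontier $\partial X=\overline{X}\setminus X$ is definable with $\dim \partial X<\dim X$. The steps are as follows.
\begin{enumerate}
\item Write $X=\overline{X}\setminus \partial X$.
\item The set $\overline{X}$ is closed, hence Borel.
\item The set $\partial X$ is definable of strictly smaller dimension, hence Borel by the induction hypothesis.
\end{enumerate}
So $X$ is a difference of two Borel sets, hence Borel. Since $\dim X\le n$ is finite, the induction terminates. Note that the induction hypothesis must quantify over all definable sets (with parameters) in all ambient powers $K^m$, with $m\le n$ suffices, of dimension less than $d$. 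This is harmless because $\partial X$ is definable from the same parameters as $X$. If the closure of a definable set is needed, it is definable by the usual $\forall\gamma\exists y$ formula using the ball structure coming from $\cO$. The dimension of an arbitrary definable set is defined in the $1$-h-minimal framework and is at most $n$.

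The main obstacle is securing the inequality $\dim(\overline{X}\setminus X)<\dim X$ in the $1$-h-minimal setting. If it is not directly quotable, I would derive it from cell decomposition / the existence of a definable set $Y\subseteq X$ with $\dim(X\setminus Y)<\dim X$ such that $Y$ is locally (after a coordinate permutation) the graph of a continuous definable function over an open subset of $K^d$. Here I would use that definable functions $K^d\to K$ are continuous off a set of lower dimension, which holds in $1$-h-minimal theories. Such $Y$ is locally closed, hence Borel. Then $X=Y\cup(X\setminus Y)$ with the second piece Borel by induction, which would also suffice and bypasses frontiers entirely. Finally, one checks that the topology here is the valuation topology, which for almost real closed fields coincides with the order topology by \Cref{obs:alltopsame}. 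This is what makes the theorem applicable later in \Cref{prop:arc.borel}.
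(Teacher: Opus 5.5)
Your proposal is correct and is essentially the paper's own proof: induct on $\dim X$, observe that $0$-dimensional definable sets are finite, and write $X=\overline{X}\setminus(\overline{X}\setminus X)$ with $\dim(\overline{X}\setminus X)<\dim X$. The inequality you flag as the main obstacle is directly quotable; the paper takes it from Proposition 3.1.1(3.f) of Cluckers--Halupczok--Rideau-Kikuchi--Vermeulen, so your fallback argument via locally closed graphs is not needed.
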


For the proof we only use dimension theory of definable sets in h-minimal structures, see~\cite[Proposition 5.3.4]{CHR} and~\cite[Proposition 3.1.1(3)]{CHRV}.

\begin{proof}
    We induct on $\dim X$.
    If $\dim X = 0$ then $X$ is finite and so the result is clear.
    So assume that $\dim X = k > 0$.
    Since $\overline{X}$ is closed it is Borel, and~\cite[Proposition 3.1.1(3.f)]{CHRV} shows that $\dim(\overline{X}\setminus X) < k$. 
    Hence by induction $\overline{X}\setminus X$ is Borel, and
    we conclude that also $X = \overline{X}\setminus (\overline{X}\setminus X)$ is Borel.
\end{proof}

	\begin{corollary}\label{prop:arc.borel}
		Let $K$ be an almost real closed field and let $<$ be an order on $K$. Let $A\subseteq K^n$ be an $\Lor$-definable subset. Then $A$ is Borel.
	\end{corollary}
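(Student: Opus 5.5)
We reduce the statement to \Cref{thm:secondmain}. To do so, we equip $K$ with the canonical henselian valuation $v_K$ and view $(K,<,v_K)$ as a structure in a language $\cL \supseteq \cL_\val$. The aim is to show that $\Th_\cL(K,<,v_K)$ is $1$-h-minimal. Since $A$ is $\Lor$-definable, it is also $\cL$-definable, so \Cref{thm:secondmain} will give that $A$ is Borel with respect to the valuation topology. By \Cref{obs:alltopsame}, the valuation topology induced by $v_K$ coincides with the order topology of $<$, which gives the claim. One degenerate case needs separate treatment. If $v_K$ is trivial, then $K = Kv_K$ is real closed, and $(K,<)$ is o-minimal. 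In that case every definable set is a finite union of cells, hence Borel; alternatively we may cite \cite{KM, Kaiser}. So we may assume that $v_K$ is non-trivial. Characteristic zero is automatic, since $K$ is real.

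The main step, and the expected obstacle, is establishing $1$-h-minimality of $(K,<,v_K)$. First I would pass to an $\aleph_1$-saturated elementary extension; Borelness is not transferred by this, but h-minimality is a property of the theory, so the passage is harmless. I would then use \Cref{prop:arcac}, or directly \Cref{lem:saturatedcompatibleac}, to add a compatible angular component map and work in the $\Lovfac$-structure $\mathcal{K}$. By \Cref{main1}, every formula reduces to conditions on $\ac(q_i(\ul x))$ in the o-minimal ordered field $(Kv_K,<)$ and on $v(q_i(\ul x))$ in the ordered abelian group $v_KK$.

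The natural route is to apply the criteria from \cite{CHR}. There it is shown that a henselian valued field of equicharacteristic zero is $\omega$-h-minimal in the pure valued field language, and more generally in any expansion that is ``RV-expansion'' of the leading-term structure. This uses the resplendence result of \cite{CHR} (Theorem~4.1.19 there, or its analogue), which says that adding arbitrary structure on $\mathrm{RV}$ preserves h-minimality. The order $<$ on $K$ is determined by the sign of the leading term, because $x>0$ if and only if $\ac(x)>0$. Hence $<$ is pulled back from $\mathrm{RV}$, namely from the induced order on $Kv_K$ together with signs. Since the residue field has characteristic zero, $(K,\cL_\val)$ is $\omega$-h-minimal, and so the expansion by $<$, being an $\mathrm{RV}$-expansion, is still $\omega$-h-minimal, in particular $1$-h-minimal. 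The delicate point to verify is that the order really is $\emptyset$-definable from a predicate on $\mathrm{RV}$ alone, without using $\ac$. This holds because the sign of $x$ depends only on $\mathrm{rv}(x) = x(1+\mathcal{M}_{v_K})$: convexity of $v_K$ implies that $1+\mathcal{M}_{v_K}$ consists of positive elements. So we can simply take $\cL = \cL_\val \cup \{P\}$ with $P$ the image of $K_{>0}$ in $\mathrm{RV}$. With this choice, the use of \Cref{main1} and of angular components is not even needed.

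Once $1$-h-minimality is in hand, we apply \Cref{thm:secondmain} to $(K,\cL)$. Every $\Lor$-definable $A\subseteq K^n$ is $\cL$-definable, hence Borel in the $v_K$-topology, and therefore Borel in the order topology by \Cref{obs:alltopsame}.
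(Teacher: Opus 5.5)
Your proposal is correct and follows the paper's proof. The paper handles the real closed (trivial $v_K$) case via o-minimality; otherwise it gets $1$-h-minimality of $(K,<,v_K)$ from $\omega$-h-minimality of equicharacteristic-zero henselian fields together with resplendence under $\mathrm{RV}$-expansions (Theorems 6.2.1 and 4.1.19 of the h-minimality paper), and then applies Theorem~\ref{thm:secondmain} and Observation~\ref{obs:alltopsame}. Your explicit check that the order is pulled back from $\mathrm{RV}$, because $1+\mathcal{M}_{v_K}$ consists of positive elements, fills in a step the paper leaves implicit; the detour through saturation, angular components and Theorem~\ref{main1} is, as you note yourself, unnecessary.
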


\begin{proof}
    If $K$ is real closed then this follows from~\cite[Lemma 11]{KM}.
    Otherwise, $v_K$ is a convex henselian valuation on $K$.
    As an ordered valued field, $K$ is $1$-h-minimal by~\cite[Theorem 6.2.1]{CHR} and~\cite[Theorem 4.1.19]{CHR}. 
    Therefore the result follows from \Cref{thm:secondmain}.
\end{proof}

    \begin{proposition}\label{prop:final}
        Assume that any of the equivalent statements of \Cref{prop:equivalentconjecturesa} (i.e.\ Shelah's conjecture specialised to real fields) holds true. Then for any NIP ordered field $(K,<)$ any $\Lor$-definable set over $(K,<)$ is Borel in the order topology.
    \end{proposition}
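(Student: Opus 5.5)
The plan is to reduce to \Cref{prop:arc.borel}. Let $(K,<)$ be an NIP ordered field, i.e.\ $\Th_{\Lor}(K,<)$ is NIP. The first step is to pass from the ordered field to the pure field. The reduct of $(K,<)$ to $\Lr$ is again NIP, because every $\Lr$-formula is an $\Lor$-formula and the independence property of a formula is witnessed inside the structure itself (equivalently, in the theory). Since $(K,<)$ is an ordered field, $K$ is a real field. Hence $K$, viewed as an $\Lr$-structure, is an NIP real field.

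The second step invokes the assumption. By statement (\ref{prop:equivalentconjectures:3}) of \Cref{prop:equivalentconjecturesa}, which is equivalent to the assumed statement (\ref{prop:equivalentconjectures:1}), the field $K$ is almost real closed. Nothing in that statement depends on the order, so the order $<$ we started with is simply some order on this almost real closed field.

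The third step applies \Cref{prop:arc.borel} to $K$ with the given order $<$. It yields that every $\Lor$-definable subset $A\subseteq K^n$ over $(K,<)$ is Borel in the order topology. By \Cref{obs:alltopsame}, this order topology coincides with the one induced by $<$, which is exactly the topology referred to in the statement. Definability with parameters is covered, since \Cref{prop:arc.borel} (via \Cref{thm:secondmain}) concerns arbitrary definable sets.

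I expect no real obstacle. The only points to check are that NIP passes to the reduct $\Lr$ and that the Borel notion in \Cref{prop:arc.borel} is the one for the order topology of the given $<$. Both are immediate from the definitions and from \Cref{obs:alltopsame}.
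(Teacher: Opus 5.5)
Your proposal is correct and follows the same route as the paper: the assumed conjecture makes the real field $K$ almost real closed, and then \Cref{prop:arc.borel} gives the conclusion. You also spell out two steps the paper leaves implicit, namely that NIP passes to the $\Lr$-reduct and that the topologies agree by \Cref{obs:alltopsame}.
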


    \begin{proof}
        By assumption, the real field $K$ is almost real closed. 
        \Cref{prop:arc.borel} yields the conclusion.
    \end{proof}
    \Cref{prop:final} shows that Shelah's conjecture implies a positive answer to \cite[Question~1.1]{krappwirthvermeil}.
	
\subsection*{Acknowledgements}

    This work was initiated within the programme \textsl{MINT-Innovationen 2022} funded by Vector Stiftung and resumed at \textsl{ddg40 : Structures algébriques et ordonnées} at Observatoire Océanologique de Banyulssur-Mer in August 2025. We wish to thank all institutions for their hospitality.
    The author L.~S.~K.\ was supported by the \textsl{Network Platform Connecting Statistical Logic, Dynamical Systems and Optimization} of Universität Konstanz. The author F.~V.\ was supported by Humboldt Foundation.

	We thank Salma Kuhlmann for giving us initial ideas, Laura Wirth for insightful discussions, and the anonymous referee for helpful comments and questions.

\subsection*{Authorship Contribution Statement:} Both authors contributed equally to this work.

\begin{footnotesize}
	
\end{footnotesize}
	\medskip\
	
	\vfill

	\textsc{Lothar Sebastian Krapp}, Institut für Interdisziplinäre Sprachevolutionswissenschaft, Universität Zürich, Switzerland \& Fachbereich Mathematik und Statistik, Universität Konstanz, Germany\\
	\textit{E-mail address}: \texttt{sebastian.krapp2@uzh.ch}

        \medskip
	
	\textsc{Floris Vermeulen}, Fachbereich Mathematik und Informatik, Universität Münster, Germany\\
	\textit{E-mail address}: \texttt{florisvermeulen.math@gmail.com}

\end{document}